\DeclareMathOperator{\Hom}{Hom}
\DeclareMathOperator{\res}{res}
\DeclareMathOperator{\tr}{tr}
\DeclareMathOperator{\IM}{Im}
\DeclareMathOperator{\RE}{Re}
\theoremstyle{plain}
\newtheorem{theorem}{Theorem}
\newtheorem{lemma}{Lemma}
\theoremstyle{definition}
\newtheorem{definition}{Definition}
\theoremstyle{remark}
\newtheorem{example}{Example}
\begin{document}

\title[Spectral expansions of random sections of homogeneous vector bundles]{Spectral expansions of random sections \\ of homogeneous vector bundles}

\author{A. Malyarenko}
\address{Division of Applied Mathematics, School of Education, Culture and Communication, M\"{a}\-lar\-da\-len University, Box 883, SE-721 23 V\"{a}ster{\aa}s, Sweden}
\thanks{This paper is written with the financial support of the Data Innovation Research Institute, Cardiff University, UK}
\subjclass[2010]{Primary 60G60; Secondary 83F05}
\date{\today}
\dedicatory{Dedicated to my teacher Mykhailo Yadrenko in occasion of his 85th birthday.}
\keywords{Random field, vector bundle, cosmology}

\begin{abstract}
Tiny fluctuations of the Cosmic Microwave Background as well as various observable quantities obtained by spin raising and spin lowering of the effective gravitational lensing potential of distant galaxies and galaxy clusters, are described mathematically as isotropic random sections of homogeneous spin and tensor bundles. We consider the three existing approaches to rigourous constructing of the above objects, emphasising an approach based on the theory of induced group representations. Both orthogonal and unitary representations are treated in a unified manner. Several examples from astrophysics are included.
\end{abstract}

\maketitle

\section{Introduction}

At the shift of millennia, cosmology came to an era of high-precision measurements. Various cosmological quantities like the temperature and polarisation of the Cosmic Microwave Background, the gravitational shear and others, are described mathematically as single realisations of random fields. Their tiny random fluctuations are now observable.

A rigourous mathematical theory of scalar-valued random fields is developing from the 1920th, see \cite{MR2977490} and the references herein. However, cosmological applications require to introduce another type of random fields, the \emph{random sections of vector-, tensor- and spin bundles}.

The first mathematical model of a complex-valued spin random field on the sphere was constructed by Geller and Marinucci in \cite{MR2737761}. They formulated the classical theory of Newman and Penrose \cite{MR0194172} in the language of complex line bundles over the two-dimensional sphere. See also \cite{MR2870527} for the case of \emph{tensor-valued} random fields on the sphere.

Another version of such a model was proposed by the author, see \cite{MR2884225}. He used the fact that the Hilbert space $H$ of square-integrable sections of a homogeneous vector- or tensor bundle with the fiber $E$ carries the so called \emph{induced representation} of the symmetry group $G$ of the bundle's base. The powerful theory of induced representations can then be used for spectral analysis of random sections of the bundle.

Baldi and Rossi \cite{MR3170229} used the following observation. There is a map called the \emph{pullback} acting from the space $H$ to the space of $E$-valued square-integrable functions on $G$ satisfying some symmetry condition. Applying the pullback to a random section of the bundle, we obtain a more simple object, an $E$-valued random field on $G$. No information is loosing under the pullback, but the theory becomes easier.

To show the issues one can meet, consider a toy example. Let $M=S^1$ be the centred unit circle embedded into the plane $\mathbb{R}^2$. Let $X(\mathbf{x})$ be a complex-valued random field on $S^1$, that is, there is a probability space $(\Omega,\mathfrak{F},\mathsf{P})$ and a function $X(\mathbf{x},\omega)\colon S^1\times\Omega\to\mathbb{C}$ such that for any fixed $\mathbf{x}_0\in S^1$ the function $X(\mathbf{x}_0,\omega)\colon\Omega\to\mathbb{C}$ is a random variable. Assume a little bit more: the function $X(\mathbf{x},\omega)$ is measurable as a function of two variables, and has a finite variance: $\mathsf{E}[|X(\mathbf{x})|^2]<\infty$ for all $\mathbf{x}\in S^1$. In other words, $X(\mathbf{x})\in L^2(\Omega)$, where $L^2(\Omega)$ is the Hilbert space of all complex-valued random variables on $\Omega$ with inner product $(X,Y)=\mathsf{E}[X\overline{Y}]$. By the result of Marinucci and Peccati \cite{MR3064996}, the field $X(\mathbf{x})$ is mean-square continuous, that is, the map $X(\mathbf{x})\colon S^1\to L^2(\Omega)$ is continuous. Put $U=S^1\setminus\{(1,0)\}$. An element $\mathbf{x}\in U$ has the form $\mathbf{x}=(\cos\varphi,\sin\varphi)^{\top}$ with $\varphi\in(0,2\pi)$. Cover the set $U$ by the chart $\sigma\colon U\to(0,2\pi)$ acting by $\sigma(\mathbf{x})=\varphi$. The mean-square continuous random field $X(\mathbf{x})$ is completely determined by its restriction $X(\varphi)$ to the dense set $U$. In what follows, we use both notations.

Let $G=\mathrm{SO}(2)$ be the group of orthogonal $2\times 2$ matrices with unit determinant. An element
\[
g_{\psi}=
\begin{pmatrix}
  \cos\psi & \sin\psi \\
  -\sin\psi & \cos\psi
\end{pmatrix}
\]
of the group~$G$ acts on an element $\mathbf{x}\in S^1$ by matrix-vector multiplication. In other words, $G$ acts on $S^1$ by rotations.

\begin{definition}\label{def:1}
A random field $X(\mathbf{x})$ is called \emph{strictly isotropic} if its finite-dimensional distributions are invariant with respect to the above action, that is, for any positive integer $n$, for any $n$ distinct points $\mathbf{x}_1$, \dots, $\mathbf{x}_n\in S^1$, and for any $g\in G$, the random vectors $(X(\mathbf{x}_1),\dots,X(\mathbf{x}_n))^{\top}$ and $(X(g\mathbf{x}_1),\dots,X(g\mathbf{x}_n))^{\top}$ have the same distribution.
\end{definition}

Let $L^2(X)$ be the intersection of all closed subspaces of the space $L^2(\Omega)$ that contain the set $\{\,X(\mathbf{x})\colon\mathbf{x}\in S^1\,\}$. The group $G$ acts on the vector $X(\mathbf{x})$ by
\[
(gX)(\mathbf{x})=X(g^{-1}\mathbf{x}),\qquad g\in G.
\]
This action can be extended by linearity and continuity to the action of $G$ in $L^2(X)$. It is easy to see the following: if the random field $X(\mathbf{x})$ is isotropic, then the above action is a unitary representation of the group~$G$, that is, a continuous homomorphism of $G$ to the group of unitary operators in $L^2(X)$ equipped with the strong operator topology.

In Section~\ref{sec:jump} we give a quick introduction to representation theory for a reader without necessary background, using \cite{MR1410059} as a main reference. Our main results, the spectral expansions of invariant random fields in homogeneous real and complex vector bundles, are presented in Section~\ref{sec:3}. Several examples from astrophysics are presented in Section~\ref{sec:4}. In particular, Example~\ref{ex:3} shows the equivalence of the spectral expansions of the polarised Cosmic Microwave Background given by Kamionkowski et al in \cite{Kamionkowski:1996ks} and by Zaldarriaga and Seljak in \cite{PhysRevD.55.1830}. In Example~4, we present an approach to the spectral expansions of the distortion random fields alternative both to expansions in spherical Bessel functions \cite{Castro_2005,Heavens_2003,Kitching_2011,Munshi_2011} and to wavelets expansions \cite{Leistedt_2012,Leistedt_2015}.

\section{A quick jump into representation theory}\label{sec:jump}

Let $G$ be a group with identity $e$, and $V$ be a set. A \emph{left action} of $G$ on $V$ is a map
\begin{equation}\label{eq:1}
\rho\colon G\times V\to V,\qquad (g,v)\mapsto \rho(g,v)=g\cdot v
\end{equation}
such that $e\cdot v=v$ and $(gh)\cdot v=g\cdot(h\cdot v)$.

Let $\mathbb{K}$ be either the field $\mathbb{R}$ of reals of the field $\mathbb{C}$ of complex numbers. A left action is called a \emph{representation} if $V$ is a linear space over $\mathbb{K}$ and all the left translations $\mathbf{v}\mapsto g\cdot\mathbf{v}$ are $\mathbb{K}$-linear. If $V$ is finite-dimensional, then we suppose in addition that the action \eqref{eq:1} is \emph{continuous}. In what follows we denote the representation \eqref{eq:1} by just $V$ if the action $\rho$ is thought.

A subspace $U\subset V$ is called \emph{invariant} if $g\cdot\mathbf{u}\in U$ for $g\in G$ and $\mathbf{u}\in U$. A representation $V\neq\{0\}$ is called \emph{irreducible} if it has no invariant subspaces other than $\{\mathbf{0}\}$ and $V$.

Denote by $V_s$ the subspace of $V$ generated by all finite-dimensional invariant subspaces. The subspace $V_s$ is invariant, since each element of $V_s$ is contained in a finite-dimensional invariant subspace.

Let $V$ and $W$ be two representations of a group~$G$. A linear map $f\colon V\to W$ is called an \emph{$G$-intertwining operator} if $f(g\cdot\mathbf{v})=g\cdot f(\mathbf{v})$ for $g\in G$ and $\mathbf{v}\in V$. Denote by $\Hom_G(V,W)$ the $\mathbb{K}$-linear space of all $G$-intertwining operators. We can make $G$ act on the above space by the representation
\begin{equation}\label{eq:2}
(g\cdot f)(\mathbf{v})=g\cdot(f(g^{-1}\cdot\mathbf{v})).
\end{equation}
The representations $V$ and $W$ are called \emph{equivalent} if the space $\Hom_G(V,W)$ contains an invertible operator.

In what follows we suppose that $G$ is a \emph{compact} topological group. There are finitely or countably many equivalence classes of irreducible representations of $G$. Moreover, each irreducible representation $V$ is finite-dimensional, and has an inner product $V\times V\to\mathbb{K}$, $(\mathbf{u},\mathbf{v})\mapsto\langle\mathbf{u},\mathbf{v}\rangle$ which is \emph{$G$-invariant}, that is, $\langle g\cdot\mathbf{u},g\cdot\mathbf{v}\rangle=\langle\mathbf{u},\mathbf{v}\rangle$ for all $g\in G$ and $\mathbf{u}$, $\mathbf{v}\in V$. Such a representation is called \emph{orthogonal} when $\mathbb{K}=\mathbb{R}$ and \emph{unitary} when $\mathbb{K}=\mathbb{C}$.

It remains to investigate the structure of a particular representation of a group $G$. Assume that $G$ is a \emph{Lie group}, that is: $G$ is a smooth manifold, and the group multiplication and taking the inverse element are smooth maps. Let $K$ be a closed subgroup of $G$, and let $(E,\langle\boldsymbol{\cdot},\boldsymbol{\cdot}\rangle)$ be a finite-dimensional representation of $K$ with $K$-invariant inner product. Let $G\times_KE$ be the quotient space of $G\times E$ under the equivalence relation $(g,\mathbf{v})\sim(gk,k^{-1}\cdot\mathbf{v})$ for $k\in K$. The map $(g,\mathbf{v})\mapsto gK$ respects the introduced relation, that is, equivalent points have the same image. Thus, the above map induces a map $p\colon G\times_KE\to G/K$. The triple $(G\times_KE,p,G/K)$ is a \emph{homogeneous vector bundle} with fiber $p^{-1}(x)$ isomorphic to $E$ for every $x\in G/K$. Traditionally, some of such bundles are called \emph{spin bundles}, see Example~\ref{ex:2} below, or \emph{tensor bundles} when the elements of $E$ are tensors, see Example~\ref{ex:3}.

A map $s\colon G/K\to G\times_KE$ is called a \emph{section} of the vector bundle $(G\times_KE,p,G/K)$, if $p\circ s$ is the identity map in $G/K$. Let $\mu$ be the probabilistic $G$-invariant Borel measure on $G/K$. Denote by $H$ the Hilbert space of the measurable sections satisfying
\[
\int_{G/K}\|s(x)\|^2_E\,\mathrm{d}\mu(x)<\infty
\]
with inner product
\[
\langle s_1,s_2\rangle=\int_{G/K}\langle s_1(x),s_2(x)\rangle_E\,\mathrm{d}\mu(x).
\]
Make $G$ act on $H$ by the representation
\[
(g\cdot s)(x)=gs(g^{-1}x).
\]
The representation $H_s$ is called the representation \emph{induced by the representation $E$ of the group~$K$}.

Let $V$ be an irreducible representation of the group~$G$. \emph{Schur's lemma} \cite[Chapter~I, Theorem~1.10]{MR1410059} says that every nonzero $f\in D(V)=\Hom_G(V,V)$ is invertible. Hence $D(V)$ is a \emph{finite-dimensional division algebra} over $\mathbb{K}$. When $\mathbb{K}=\mathbb{C}$, there is only one possibility, $\mathbb{C}$, see \cite[Theorem~7.6]{MR780184}. When $\mathbb{K}=\mathbb{R}$, there are only three possibilities, namely, $\mathbb{R}$, $\mathbb{C}$, and $\mathbb{H}$, the skew field of quaternions (Frobenius Theorem, \cite[p.~452]{MR780184}).

The \emph{composition} $f\circ\alpha$, $f\in\Hom_G(V,H_s)$, $\alpha\in D(V)$, turns $\Hom_G(V,H_s)$ into a $D(V)$-linear space. If $D(V)=\mathbb{H}$, then this space is a \emph{right} one. The \emph{evaluation} $\alpha(\mathbf{v})$, $\mathbf{v}\in V$, turns $V$ into another $D(V)$-linear space. If $D(V)=\mathbb{H}$, then this space is a \emph{left} one. Make $G$ act on the correctly defined tensor product $\Hom_G(V,H_s)\otimes_{D(V)}V$ by the representation
\[
g\cdot(f\otimes\mathbf{v})=(g\cdot f)\otimes(g\cdot\mathbf{v}),
\]
where the representation $g\cdot f$ is given by \eqref{eq:2}. Define the \emph{evaluation map}
\[
c_V\colon\Hom_G(V,H_s)\otimes_{D(V)}V\to H
\]
by
\[
c_V(f\otimes\mathbf{v})=f(\mathbf{v}).
\]

Denote by $\hat{G}$ the set of equivalence classes of irreducible representations of the group~$G$ and by $\mathbf{c}$ the algebraic direct sum of the linear maps $c_V$ over $V\in\hat{G}$. By \cite[Chapter~III, Proposition~1.7]{MR1410059}, the map
\[
\mathbf{c}\colon\sum_{V\in\hat{G}}\oplus\Hom_G(V,H_s)\otimes_{D(V)}V\to H_s
\]
is an invertible $G$-intertwining operator. In other words, the representation $H_s$ is equivalent to the direct sum of the representations $\Hom_G(V,H_s)\otimes_{D(V)}V$ over $V\in\hat{G}$. The image $c_V(\Hom_G(V,H_s)\otimes_{D(V)}V)$ is called the \emph{$V$-isotypical part} of $H_s$ and is the maximal subspace of $H_s$ whose irreducible subspaces are all equivalent to $V$.

We would like to calculate $n(V,H_s)$, the number of copies of $V$ containing in $H_s$. We have
\[
\begin{aligned}
n(V,H_s)&=\frac{\dim_{\mathbb{K}}c_V(\Hom_G(V,H_s)\otimes_{D(V)}V)}{\dim_{\mathbb{K}}V}\\
&=\frac{\dim_{\mathbb{K}}(\Hom_G(V,H_s)\otimes_{D(V)}V)}{\dim_{\mathbb{K}}V},
\end{aligned}
\]
because $c_V$ is invertible. Furthermore,
\[
\begin{aligned}
n(V,H_s)&=\frac{\dim_{D(V)}(\Hom_G(V,H_s)\otimes_{D(V)}V)\dim_{\mathbb{K}}D(V)}{\dim_{\mathbb{K}}V}\\
&=\dim_{D(V)}\Hom_G(V,H_s)\dim_{D(V)}V\frac{\dim_{\mathbb{K}}D(V)}{\dim_{\mathbb{K}}V}\\
&=\frac{\dim_{\mathbb{K}}\Hom_G(V,H_s)}{\dim_{\mathbb{K}}D(V)}\cdot
\frac{\dim_{\mathbb{K}}V}{\dim_{\mathbb{K}}D(V)}\cdot\frac{\dim_{\mathbb{K}}D(V)}{\dim_{\mathbb{K}}V}\\
&=\frac{\dim_{\mathbb{K}}\Hom_G(V,H_s)}{\dim_{\mathbb{K}}D(V)}.
\end{aligned}
\]

In order to calculate $\dim_{\mathbb{K}}\Hom_G(V,H_s)$, we use \emph{Frobenius reciprocity} \cite[Chapter~III, Proposition~6.2]{MR1410059}. It says that the representation $\Hom_G(V,H_s)$ is equivalent to the representation $\Hom_K(\res^G_KV,E)$, where $\res^G_KV$ is the restriction of the representation $V$ to $K$. In particular, we have
\begin{equation}\label{eq:3}
\dim_{\mathbb{K}}\Hom_G(V,H_s)=\dim_{\mathbb{K}}\Hom_K(\res^G_KV,E).
\end{equation}
Assume \emph{$E$ is irreducible}. The right hand side of Equation \eqref{eq:3} is $n(E,\res^G_KV)$, and we obtain
\begin{equation}\label{eq:4}
n(V,H_s)=\frac{n(E,\res^G_KV)}{\dim_{\mathbb{K}}D(V)}.
\end{equation}
In particular, when $\mathbb{K}=\mathbb{C}$, we have $\dim_{\mathbb{C}}D(V)=\dim_{\mathbb{C}}\mathbb{C}=1$, and we recover the classical form of Frobenius reciprocity: \emph{the multiplicity of $V$ in $H_s$ is equal to the multiplicity of $E$ in $\res^G_KV$}.

When $E$ is not irreducible, we use the following result: the representation induced by a direct sum is equivalent to the direct sum of representations induced by the components of the direct sum. Assume that $E$ contains $n_1$ copies of the irreducible representation $E_1$, \dots, $n_p$ copies of the irreducible representation $E_p$. Then we have
\[
n(V,H_s)=\frac{1}{\dim_{\mathbb{K}}D(V)}\sum_{i=1}^{p}n_in(E_i,\res^G_KV).
\]

The \emph{generalised theorem of Peter and Weyl} \cite[Chapter~III, Theorem~5.7]{MR1410059} says that $H_s$ is dense in $H$. Therefore, the representation $H$ has the same structure as $H_s$ has.

\section{Invariant random fields in vector bundles}\label{sec:3}

For any $x\in G/K$, let $\mathbf{X}(x)$ be a random vector in the fiber $p^{-1}(x)$ isomorphic to $E$. Call $\mathbf{X}(x)$ a \emph{random field in the vector bundle $(G\times_KE,p,G/K)$} if, in addition, the function $\mathbf{X}(x,\omega)$ is measurable as a function of two variables. Definition~\ref{def:1} can be modified as follows.

\begin{definition}
The random field $\mathbf{X}(x)$ is called \emph{strictly invariant} if its finite-dimensional distributions are invariant with respect to the action $x\mapsto g^{-1}x$ of the group~$G$.
\end{definition}

Assume that $\mathsf{E}[\|\mathbf{X}(x)\|^2_E]<\infty$ and let $L^2(\mathbf{X})$ be the closed linear span of the vectors $\mathbf{X}(x)$, $x\in G/K$, in the space $L^2(\Omega,E)$. Assume that $\mathbf{X}(x)$ is \emph{mean-square continuous}, that is, the map $\mathbf{X}\colon G/K\to L^2(\Omega)$ is continuous. Then for any $g\in G$ the domain of the map defined by
\[
U(g)\mathbf{X}(x)=\mathbf{X}(g^{-1}x)
\]
can be extended by linearity and continuity to the space $L^2(X)$. We denote the extended map by the same symbol, $U(g)$, and observe that $U(g)$ is a continuous representation of $G$ in $L^2(\mathbf{X})$ and the inner product of the above space is $G$-invariant. We investigate the structure of the above representation.

Assume first that $E$ is irreducible with an orthonormal basis $\{\,\mathbf{e}_j\colon 1\leq j\leq\dim_{\mathbb{K}}E\,\}$. Denote by $\hat{G}_K(E)$ the set of all $V\in\hat{G}$ with $n(E,\res^G_KV)>0$ and by $V_0$ the trivial representation of the group $G$: $V_0(g)=1$ for all $g\in G$. For each $V\in\hat{G}_K(E)$, fix an orthonormal basis ${}_E\mathbf{Y}^m_{Vn}(x)$ in the space of the $n$th copy of the representation $V$, where $1\leq m\leq\dim_{\mathbb{K}}V$, $1\leq n\leq n(V,H_s)$, and $n(V,H_s)$ is defined by \eqref{eq:4}.

\begin{theorem}\label{th:1}
The random field $\mathbf{X}(x)$ has the form
\[
X_j(x)=\sum_{V\in\hat{G}_K(E)}\sum_{m=1}^{\dim_{\mathbb{K}}V}\sum_{n=1}^{n(V,H_s)}
\sum_{k=1}^{\dim_{\mathbb{K}}E}a_{Vnmk}({}_EY^m_{Vn})_j(x),
\]
where
\[
a_{Vnmk}=\int_{G/K}X_k(x)\overline{({}_EY^m_{Vn})_k(x)}\,\mathrm{d}\mu(x).
\]
If $V\neq V_0$, then $\mathsf{E}[a_{Vnmk}]=0$. Finally,
\[
\mathsf{E}[a_{Vnmk}\overline{a_{V'n'm'k'}}]=\delta_{VV'}\delta_{mm'}
R^{Vn}_{kk'}
\]
with
\[
\sum_{V\in\hat{G}_K(E)}\sum_{n=1}^{n(V,H_s)}\dim_{\mathbb{K}}V\tr[R^{Vn}]<\infty.
\]
\end{theorem}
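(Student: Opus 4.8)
The plan is to transport the structural decomposition of $H$ established in Section~\ref{sec:jump} to the random field $\mathbf{X}(x)$, exploiting mean-square continuity so that the field is determined by a single element of $L^2(\Omega,E)$-valued sections. First I would note that since $\mathbf{X}(x)$ is mean-square continuous and $\mathsf{E}[\|\mathbf{X}(x)\|^2_E]<\infty$, each coordinate $X_j(x)$ with respect to the fixed orthonormal basis $\{\mathbf{e}_j\}$ lies in $H$ once we fix $\omega$ on a set of full measure; more precisely, I would work with the Hilbert space $H\otimes L^2(\Omega)$ and view $\mathbf{X}$ as an element of it. By the generalised Peter--Weyl theorem quoted above, $H$ decomposes as the orthogonal Hilbert-space direct sum of its $V$-isotypical parts over $V\in\hat{G}$, and the basis $\{({}_E\mathbf{Y}^m_{Vn})(x)\}$ is a complete orthonormal system in $H$ indexed by $V\in\hat{G}_K(E)$, $1\le m\le\dim_{\mathbb{K}}V$, $1\le n\le n(V,H_s)$ (the restriction to $\hat G_K(E)$ is exactly the statement that $n(V,H_s)=0$ otherwise, by \eqref{eq:4}). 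Expanding each coordinate $X_j(x)$ in this basis yields the stated series with coefficients $a_{Vnmk}$, the integral formula for $a_{Vnmk}$ being just the Fourier coefficient against the chosen orthonormal section; convergence is in $L^2(G/K)\otimes L^2(\Omega)$, i.e.\ in mean square uniformly in the appropriate sense, and the finiteness of $\sum \dim_{\mathbb{K}}V\,\tr[R^{Vn}]$ will fall out of Parseval once the covariance claim is proved.

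Next I would establish the vanishing of the means for $V\neq V_0$. Strict invariance forces $\mathsf{E}[\mathbf{X}(x)]$, as a section, to be a $G$-invariant element of $H$, hence to lie in the $V_0$-isotypical part; therefore its Fourier coefficients $\mathsf{E}[a_{Vnmk}]$ against any section in a $V\neq V_0$ isotypical component vanish. (Concretely one commutes $\mathsf{E}$ with the integral defining $a_{Vnmk}$, uses $\mathsf{E}[X_k(g^{-1}x)]=\mathsf{E}[X_k(x)]$ by invariance, and invokes the $G$-invariance of $\mu$ together with Schur orthogonality of matrix coefficients.)

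The heart of the argument is the covariance identity $\mathsf{E}[a_{Vnmk}\overline{a_{V'n'm'k'}}]=\delta_{VV'}\delta_{mm'}R^{Vn}_{kk'}$, and this is where I expect the main obstacle to lie. The idea is that the map $g\mapsto U(g)$ is a unitary representation of $G$ on $L^2(\mathbf{X})$, and the random array $(a_{Vnmk})$ transforms under $U(g)$ the way the basis sections ${}_E\mathbf{Y}^m_{Vn}$ transform under the induced representation, namely by the matrix of $V(g)$ in the index $m$ (with $n$, and $V$, untouched). Invariance of the law of $\mathbf{X}$ then forces the second-moment array to be invariant under this $G$-action on both slots simultaneously; the block-diagonality in $V$ and the scalar-on-$m$ structure $\delta_{VV'}\delta_{mm'}$ are precisely the content of Schur's lemma applied to the commutant of $\bigoplus_{V,n}(\text{copies of }V)$, using that $D(V)=\mathbb{C}$ when $\mathbb{K}=\mathbb{C}$ and handling the real and quaternionic cases by the corresponding division-algebra structure on $\Hom_G(V,H_s)$ described above. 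The delicate points are: (i) making rigorous the commutation of expectation with the stochastic integral defining $a_{Vnmk}$, which I would justify by Fubini together with the finite second moment and mean-square continuity; (ii) verifying that $U(g)$ indeed acts on the coefficients by the matrix coefficients of $V$, which requires unwinding the definition of the orthonormal basis ${}_E\mathbf{Y}^m_{Vn}$ coming from the evaluation maps $c_V$ and checking the intertwining relation against \eqref{eq:2}; and (iii) organising the Schur-lemma bookkeeping uniformly in $\mathbb{K}\in\{\mathbb{R},\mathbb{C}\}$, so that the single symbol $R^{Vn}$ (a positive-semidefinite $\dim_{\mathbb{K}}E\times\dim_{\mathbb{K}}E$ matrix over $\mathbb{K}$) covers both the orthogonal and unitary cases. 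Once (i)--(iii) are in place, summing $\mathsf{E}[|a_{Vnmk}|^2]$ over all indices and applying Parseval to $\mathsf{E}[\|\mathbf{X}(x)\|_E^2]$ integrated over $G/K$ gives $\sum_{V,n}\dim_{\mathbb{K}}V\,\tr[R^{Vn}]=\int_{G/K}\mathsf{E}[\|\mathbf{X}(x)\|_E^2]\,\mathrm{d}\mu(x)<\infty$, completing the proof.
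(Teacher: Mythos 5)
Your sketch is correct and follows the same route as the paper, whose proof of Theorem~\ref{th:1} consists entirely of citing \cite[Theorem~2]{MR2884225} and noting the extra summation over $n$: the cited argument is precisely the Peter--Weyl expansion of $\mathbf{X}$ in $H\otimes L^2(\Omega)$, invariance of the mean forcing it into the $V_0$-isotypical part, Schur orthogonality for the covariance structure, and Parseval (applied to each section $X_k\mathbf{e}_k$ and summed over $k$) for the trace condition. The delicate points you flag --- notably the division-algebra bookkeeping needed to get $\delta_{mm'}$ when $\mathbb{K}=\mathbb{R}$ --- are genuine but are resolved by the standard observation that a positive-semidefinite self-adjoint element of the commutant of an irreducible orthogonal representation is scalar.
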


\begin{proof}
When $\mathbb{K}=\mathbb{C}$ and $n(V,H_s)=1$, this is \cite[Theorem~2]{MR2884225}. Otherwise, this is a straightforward extension with one more summation over the index $n$.
\end{proof}

Let $H_{\mathbf{X}}$ be the subspace of $H$ spanned by the vectors $\{\,{}_E\mathbf{Y}^m_{Vn}(x)\colon R^{Vn}\neq 0\,\}$. Denote $\mathbf{a}_{Vnm}=(a_{Vnm1},\dots,a_{Vnm\dim_{\mathbb{K}}E})^{\top}\in L^2(\mathbf{X})$. Let $R^{Vn}=A^{Vn}(A^{Vn})^{\top}$ be the Cholesky decomposition of the matrix $R^{Vn}$. The map
\[
T\mathbf{a}_{Vnm}=A^{Vn}{}_E\mathbf{Y}^m_{Vn}(x)
\]
may be extended by linearity and continuity to an intertwining operator from $L^2(\mathbf{X})$ to $H_{\mathbf{X}}$. Thus, the representation $U(g)$ contains as many copies of the irreducible representation $V$ as many elements are containing in the set $\{\,n\colon 1\leq n\leq n(V,H_s),R^{Vn}\neq 0\,\}$.

If $E$ is not irreducible, suppose that $E$ is the direct sum of the irreducible representations $E_1$, \dots, $E_N$. Denote the components of an invariant random field $\mathbf{X}(x)$ by $X_j^{(i)}(x)$, $1\leq i\leq N$, $1\leq j\leq\dim_{\mathbb{K}}E_i$. Let $P_i$ be the orthogonal projection from $E$ to $E_i$. Let $H^i_s$ be the representation induced by $E_i$. The next result is a straightforward extension of \cite[Theorem~2]{MR2884225}.

\begin{theorem}
The random field $\mathbf{X}(x)$ has the form
\begin{equation}\label{eq:5}
X^{(i)}_j(x)=\sum_{V\in\hat{G}_K(E_i)}\sum_{m=1}^{\dim_{\mathbb{K}}V}\sum_{n=1}^{n(V,H^i_s)}
\sum_{k=1}^{\dim_{\mathbb{K}}E_i}a_{Vnmik}({}_{E_i}Y^m_{Vn})_j(x),
\end{equation}
where
\[
a_{Vnmik}=\int_{G/K}X^{(i)}_k(x)\overline{({}_{E_i}Y^m_{Vn})_k(x)}\,\mathrm{d}\mu(x).
\]
If $V\neq V_0$, then $\mathsf{E}[a_{Vnmik}]=0$. Finally,
\[
\mathsf{E}[a_{Vnmik}\overline{a_{V'n'm'i'k'}}]=\delta_{VV'}\delta_{mm'}
R^{Vn}_{ik,i'k'}
\]
with
\[
\sum_{i=1}^{N}\sum_{V\in\hat{G}_K(E_i)}\sum_{n=1}^{n(V,H^i_s)}\dim_{\mathbb{K}}V\tr[P_iR^{Vn}P_i]<\infty.
\]
\end{theorem}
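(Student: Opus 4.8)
The plan is to deduce the statement from the irreducible case, Theorem~\ref{th:1}, by decomposing the fiber, and then to add the one genuinely new ingredient: the covariances between coefficients attached to different summands $E_i$.

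First I would observe that, since each $E_i$ is a $K$-invariant subspace of $E$, the orthogonal projection $P_i\colon E\to E_i$ is $K$-equivariant, hence induces a bundle morphism $G\times_KE\to G\times_KE_i$, $[g,\mathbf{v}]\mapsto[g,P_i\mathbf{v}]$, and a $G$-intertwining orthogonal projection of $H$ onto $H^i_s$; this is the concrete content of the fact, recalled in Section~\ref{sec:jump}, that the representation induced by $E=\bigoplus_iE_i$ is equivalent to $\bigoplus_iH^i_s$. Consequently $X^{(i)}(x):=P_i\mathbf{X}(x)$ is a mean-square continuous strictly invariant random field in the homogeneous vector bundle $G\times_KE_i$ with irreducible fiber. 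Applying Theorem~\ref{th:1} to each $X^{(i)}$, using the prescribed orthonormal bases ${}_{E_i}\mathbf{Y}^m_{Vn}(x)$ of the copies of $V$ inside $H^i_s$, immediately yields the expansion \eqref{eq:5}, the integral formula for $a_{Vnmik}$, the vanishing of $\mathsf{E}[a_{Vnmik}]$ for $V\neq V_0$, and the covariance identity $\mathsf{E}[a_{Vnmik}\overline{a_{V'n'm'i'k'}}]=\delta_{VV'}\delta_{mm'}R^{Vn}_{ik,i'k'}$ restricted to the diagonal blocks $i=i'$.

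What remains is the off-diagonal blocks $i\neq i'$ and the summability bound. For the off-diagonal covariances I would argue directly from invariance: the sesquilinear form $(\xi,\eta)\mapsto\mathsf{E}[\xi\overline{\eta}]$ on $L^2(\mathbf{X})$ is invariant under $U(g)$ because $\mathbf{X}$ is strictly invariant, so decomposing $L^2(\mathbf{X})\subset H=\bigoplus_iH^i_s$ into its $G$-isotypical parts and invoking Schur's lemma forces this form to vanish between constituents of inequivalent type (the factor $\delta_{VV'}$) and to act by a scalar in the representation-space index (the factor $\delta_{mm'}$), leaving an arbitrary Hermitian form in the surviving indices, whose matrix entries one names $R^{Vn}_{ik,i'k'}$; its nonnegative definiteness is automatic. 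The summability bound is then obtained exactly as in the irreducible case: writing $\int_{G/K}\mathsf{E}[\|\mathbf{X}(x)\|^2_E]\,\mathrm{d}\mu(x)=\sum_i\int_{G/K}\mathsf{E}[\|X^{(i)}(x)\|^2_{E_i}]\,\mathrm{d}\mu(x)$, inserting \eqref{eq:5}, using orthonormality of the ${}_{E_i}\mathbf{Y}^m_{Vn}$ together with the covariance identity (the projector $P_i$ appearing because only the $i$-th fiber summand enters the $i$-th term), and noting that the left-hand side is finite because $\mu$ is a probability measure and $\mathbf{X}$ is mean-square continuous.

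The step that takes the most care is a bookkeeping one: for the factor $\delta_{mm'}$ and the matching of the copies indexed by $n$ to be meaningful across different summands $E_i$, one must fix, once and for all and independently of $i$, a model space for each class $V\in\hat{G}$ together with a basis of it, and choose every ${}_{E_i}\mathbf{Y}^m_{Vn}$ relative to that model. Once this convention is in place, the whole argument is the routine extension of the proof of \cite[Theorem~2]{MR2884225} with one additional summation over $i$, as asserted.
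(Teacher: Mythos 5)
Your proposal is correct and follows essentially the same route as the paper, which offers no argument beyond declaring the result a straightforward extension of \cite[Theorem~2]{MR2884225} with an extra summation over the index $i$: you reduce to the irreducible case via the $K$-equivariant projections $P_i$, and supply the only genuinely new ingredients (the off-diagonal covariance blocks via Schur's lemma and the trace bound via Parseval). Your closing remark about fixing a single model space and basis for each $V\in\hat{G}$ across all $E_i$, so that the factor $\delta_{mm'}$ is well defined, is a point the paper leaves implicit and is worth making.
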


\section{Examples}\label{sec:4}

\begin{example}[The absolute temperature of the Cosmic Microwave Background]

Put $\mathbb{K}=\mathbb{C}$, $G=\mathrm{SO}(3)$, $K=\mathrm{SO}(2)$, and let $E$ be the trivial representation of $K$. The irreducible unitary representations of $G$ are enumerated by nonnegative integers \cite{MR1410059}, denote them by $V^{\ell}$. The restriction of $V^{\ell}$ to $K$ is isomorphic to the direct sum of the irreducible unitary representations $\varphi\mapsto\mathrm{e}^{\mathrm{i}m\varphi}$, $-\ell\leq m\leq\ell$. In particular, the multiplicity of $E$ in $V^{\ell}$ is equal to $1$. It follows that the representation $H$ contains one copy of each representation $V^{\ell}$. The space $G/K$ is the unit sphere $S^2$. Denote by $H^{\ell}$ the subspace of the space $L^2(S^2)$ where the above copy acts. The space $H^{\ell}$ is spanned by \emph{spherical harmonics}, $Y_{\ell m}(\theta,\varphi)$, where $(\theta,\varphi)$ is the chart of the manifold $S^2$ known as \emph{spherical coordinates}. In what follows, we will also denote spherical harmonics by $Y_{\ell m}(\mathbf{n})$, where $\mathbf{n}\in\mathbb{R}^3$ with $\|\mathbf{n}\|=1$. By Theorem~\ref{th:1}, an invariant random field in $(\mathrm{SO}(3)\times_{\mathrm{SO}(2)}\mathbb{C}^1,p,S^2)$ has the form
\begin{equation}\label{eq:6}
T(\theta,\varphi)=\sum_{\ell=0}^{\infty}\sum_{m=-\ell}^{\ell}a_{\ell m}
Y_{\ell m}(\theta,\varphi),
\end{equation}
where
\[
a_{\ell m}=\int_{S^2}T(\theta,\varphi)\overline{Y_{\ell m}(\theta,\varphi)}\,
\mathrm{d}\mu(\theta,\varphi),
\]
and $\mathrm{d}\mu(\theta,\varphi)=\sin\theta\,\mathrm{d}\theta\,\mathrm{d}\varphi=\mathrm{d}\mathbf{n}$ is the $\mathrm{SO}(3)$-invariant measure on $S^2$ with $\mu(S^2)=4\pi$. $G$-invariant random fields with $G=\mathrm{SO}(3)$ or $G=\mathrm{O}(3)$ are usually called \emph{isotropic}. By Theorem~\ref{th:1} $\mathsf{E}[a_{\ell m}]=0$ if $\ell\neq 0$, $\mathsf{E}[a_{\ell m}\overline{a_{\ell'm'}}]=C_{\ell}\delta_{\ell\ell'}\delta_{mm'}$ and
\[
\sum_{\ell=0}^{\infty}(2\ell+1)C_{\ell}<\infty.
\]

The spherical harmonics satisfy the relation $Y_{\ell\,-m}(\mathbf{n})=(-1)^m\overline{Y_{\ell m}(\mathbf{n})}$. It follows that the random field~\eqref{eq:6} is real-valued if and only if
\begin{equation}\label{eq:7}
a_{\ell\,-m}=(-1)^m\overline{a_{\ell m}}.
\end{equation}
In this case, we can write the random field \eqref{eq:6} in the form
\begin{equation}\label{eq:8}
T(\mathbf{n})=\sum_{\ell=0}^{\infty}\sum_{m=-\ell}^{\ell}\tilde{a}_{\ell m}S_{\ell}^m(\mathbf{n}),
\end{equation}
where
\begin{equation}\label{eq:11}
\begin{aligned}
\tilde{a}_{\ell m}&=
\begin{cases}
  \sqrt{2}\IM a_{\ell m}, & \mbox{if } m<0, \\
  a_{\ell m}, & \mbox{if } m=0, \\
  \sqrt{2}\RE a_{\ell m}, & \mbox{if } m>0,
\end{cases} \\
S^m_{\ell}(\mathbf{n})&=
\begin{cases}
  \frac{\mathrm{i}}{\sqrt{2}}(Y_{\ell m}(\mathbf{n})-(-1)^mY_{\ell\,-m}(\mathbf{n})), & \mbox{if } m<0, \\
  Y_{\ell m}(\mathbf{n}), & \mbox{if } m=0, \\
  \frac{1}{\sqrt{2}}(Y_{\ell m}(\mathbf{n})+(-1)^mY_{\ell\,-m}(\mathbf{n})), & \mbox{if } m>0.
\end{cases}
\end{aligned}
\end{equation}
The functions $S^m_{\ell}(\mathbf{n})$ are \emph{real-valued spherical harmonics}. In contrast to their complex-valued counterparts, that are proportional to the matrix entries of irreducible \emph{unitary} representations of the group $\mathrm{SO}(3)$, the functions $S^m_{\ell}(\theta,\varphi)$ are proportional to the matrix entries of irreducible \emph{orthogonal} representations of the above group. Either Equations \eqref{eq:6} and \eqref{eq:7} or Equation \eqref{eq:8} describe the absolute temperature of the Cosmic Microwave Background. The set $\{\,C_{\ell}\colon\ell\geq 0\,\}$ is called the \emph{power spectrum} and carries a lot of cosmological information.

\end{example}

\begin{example}[Spin bundles]\label{ex:2}

Put $\mathbb{K}=\mathbb{C}$, $G=\mathrm{SO}(3)$, $K=\mathrm{SO}(2)$, $E_s=\mathbb{C}^1$ with the action of $K$ given by $\varphi\mapsto\mathrm{e}^{-\mathrm{i}s\varphi}$, $s\in\mathbb{Z}$. The restriction of the irreducible unitary representation $V^{\ell}$ to $K$ contains one copy of the representation $E_s$ if and only if $\ell\in\{|s|,|s|+1,\dots\}$. By Frobenius reciprocity,
\[
H_s=\sum_{\ell=|s|}^{\infty}\oplus V^{\ell}.
\]
The basis vectors of the subspace of the Hilbert space $L^2(S^2,\mathrm{SO}(3)\times_{\mathrm{SO}(2)}E_s)$
where the representation $W_{\ell}$ acts, are called \emph{spin~$s$ spherical harmonics} and are denoted by ${}_sY_{\ell m}(\mathbf{n})$. In particular, ${}_0Y_{\ell m}(\mathbf{n})=Y_{\ell m}(\mathbf{n})$. There exist different conventions concerning these functions, see a survey in \cite{MR2884225}. In what follows, we use conventions by \cite{Leistedt_2012}.

Let $(Q+\mathrm{i}U)(\theta,\varphi)$ be an isotropic random field in the \emph{spin bundle} $(\mathrm{SO}(3)\times_{\mathrm{SO}(2)}H_s,p,S^2)$ with $s=2$. Then it has the form
\begin{equation}\label{eq:9}
(Q\pm\mathrm{i}U)(\mathbf{n})=\sum_{\ell=2}^{\infty}\sum_{m=-\ell}^{\ell}
a^{(\pm 2)}_{\ell m}\,{}_{\pm 2}Y_{\ell m}(\mathbf{n}),
\end{equation}
where
\[
a_{\pm 2,\ell m}=\int_{\mathbb{C}P^1}(Q\pm\mathrm{i}U)(\mathbf{n})\,
\overline{{}_{\pm 2}Y_{\ell m}(\mathbf{n})}\,\mathrm{d}\mathbf{n}.
\]
The real-valued random fields $Q(\mathbf{n})$ and $U(\mathbf{n})$ describe the Stokes parameters of the linear polarisation of the Cosmic Microwave Background. The expansion~\eqref{eq:9} appeared in \cite{PhysRevD.55.1830}.

Define
\[
e_{\ell m}=\RE a^{(2)}_{\ell m},\qquad b_{\ell m}=\IM a^{(2)}_{\ell m}.
\]
Under the \emph{parity transformation} $\mathbf{n}\mapsto-\mathbf{n}$, the spin $s$ spherical harmonics are transformed as
\[
{}_sY_{\ell m}(-\mathbf{n})=(-1)^{\ell}\,{}_{-s}Y_{\ell m}(\mathbf{n}).
\]
Then we have
\[
(Q\pm\mathrm{i}U)(-\mathbf{n})=\sum_{\ell=2}^{\infty}\sum_{m=-\ell}^{\ell}
(e_{\ell m}\pm\mathrm{i}b_{\ell m})(-1)^{\ell}\,{}_{\mp 2}Y_{\ell m}(\mathbf{n}).
\]
It follows that under the parity transformation, $e_{\ell m}$ remains invariant, while $b_{\ell m}$ changes sign. Following \cite{PhysRevD.55.1830}, introduce the random fields
\[
E(\mathbf{n})=\sum_{\ell=2}^{\infty}\sum_{m=-\ell}^{\ell}e_{\ell m}Y_{\ell m}(\mathbf{n}),\qquad B(\mathbf{n})=\sum_{\ell=2}^{\infty}\sum_{m=-\ell}^{\ell}b_{\ell m}Y_{\ell m}(\mathbf{n}).
\]
The random field $E(\mathbf{n})$ is scalar, like the electric field, while $B(\mathbf{n})$ is \emph{pseudo-scalar}, like the magnetic field, hence the notation.

\end{example}

\begin{example}[A tensor bundle]\label{ex:3}

Put $\mathbb{K}=\mathbb{R}$, $G=\mathrm{O}(3)$, $K=\mathrm{O}(2)$. Let $E$ be the real linear space of Hermitian linear operators over a two-dimensional complex linear space with inner product $(A,B)=\tr AB$, where the action of $K$ is given by $k\cdot A=kAk^{-1}$, $k\in K$, $A\in E$.

The group $\mathrm{O}(2)$ has the following irreducible orthogonal representations: $E^{0+}(k)=1$, $E^{0-}(k)=\det k$, and the representations $E^m$, $m\geq 1$, acting by
\begin{equation}\label{eq:12}
\begin{aligned}
E^m
\begin{pmatrix}
  \cos\varphi & -\sin\varphi \\
  \sin\varphi & \cos\varphi
\end{pmatrix}
&=
\begin{pmatrix}
  \cos(m\varphi) & -\sin(m\varphi) \\
  \sin(m\varphi) & \cos(m\varphi)
\end{pmatrix}
,\\
E^m
\begin{pmatrix}
  \cos\varphi & \sin\varphi \\
  \sin\varphi & -\cos\varphi
\end{pmatrix}
&=
\begin{pmatrix}
  \cos(m\varphi) & \sin(m\varphi) \\
  \sin(m\varphi) & -\cos(m\varphi)
\end{pmatrix}
.
\end{aligned}
\end{equation}

In contrast to the previous examples, this time the representation $E$ is
\emph{reducible} and contains three inequivalent irreducible components. The first component is $E^{0+}$. It acts in the one-dimensional subspace $E_+$ generated by the matrix
\[
\frac{1}{\sqrt{2}}\sigma_0=\frac{1}{\sqrt{2}}
\begin{pmatrix}
  1 & 0 \\
  0 & 1
\end{pmatrix}
.
\]
The elements of this space are \emph{scalars}. The second component is $E^{0-}$. It acts in the one-dimensional subspace $E_-$ generated by the matrix
\[
\frac{1}{\sqrt{2}}\sigma_2=\frac{1}{\sqrt{2}}
\begin{pmatrix}
  0 & -\mathrm{i} \\
  \mathrm{i} & 0
\end{pmatrix}
.
\]
The elements of this subspace are \emph{pseudo-scalars}. Finally, the third irreducible component is $E^2$. It acts in the two-dimensional space $E_2$ of symmetric trace-free matrices generated by
\[
\frac{1}{\sqrt{2}}\sigma_1=\frac{1}{\sqrt{2}}
\begin{pmatrix}
  0 & 1 \\
  1 & 0
\end{pmatrix}
,\qquad \frac{1}{\sqrt{2}}\sigma_3=\frac{1}{\sqrt{2}}
\begin{pmatrix}
  1 & 0 \\
  0 & -1
\end{pmatrix}
.
\]
The matrices $\sigma_i$ are known as \emph{Pauli matrices}.

We use Frobenius reciprocity to analyse the structure of the space $L^2(S^2,\mathrm{O}(3)\times_{\mathrm{O}(2)}E)$. Let $V$ be an irreducible orthogonal representation of the group~$\mathrm{O}(3)$. The group $\mathrm{O}(3)$ is the Cartesian product of its subgroups $\mathrm{SO}(3)$ and $Z^c_2=\{\pm e\}$. Therefore, $V=V^{\ell\pm}=V^{\ell}\otimes V^{\pm}$, where $V^{\ell}$ is the irreducible orthogonal representation of the group $\mathrm{SO}(3)$, $V^+$ is the trivial representation of the group $Z^c_2$, and $V^-$ is the representation $g\mapsto\det g$ of $Z^c_2$.

\begin{lemma}\label{lem:2}
The restriction of the representation $V^{\ell\pm}$ to the subgroup $\mathrm{O}(2)$ has the form
\begin{equation}\label{eq:10}
\begin{aligned}
\res^G_KV^{2\ell\pm}&=E^{0\pm}\oplus E^1\oplus\cdots\oplus E^{2\ell},\\
\res^G_KV^{(2\ell+1)\pm}&=E^{0\mp}\oplus E^1\oplus\cdots\oplus E^{2\ell+1}.
\end{aligned}
\end{equation}
\end{lemma}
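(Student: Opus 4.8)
The plan is to prove \eqref{eq:10} by comparing characters. Since $G=\mathrm{O}(3)$ and $K=\mathrm{O}(2)$ are compact, two of their representations are equivalent as soon as their characters coincide, so it suffices to evaluate the character of $\res^G_KV^{L\pm}$ — where I write $L$ for the superscript of $V$, so that $L=2\ell$ or $L=2\ell+1$ — on the two cosets of $K=\mathrm{SO}(2)\cup\tau\,\mathrm{SO}(2)$, with $\mathrm{SO}(2)$ the identity component of $K$ and $\tau$ any reflection in $K$, and to match the outcome against the character of the right-hand side of \eqref{eq:10} read off from \eqref{eq:12}.

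First I would handle the identity component. Since $\mathrm{SO}(2)\subset\mathrm{SO}(3)$ and the $Z^c_2$-factor of $G=\mathrm{SO}(3)\times Z^c_2$ acts trivially on $\mathrm{SO}(3)$, a rotation $r_\varphi\in\mathrm{SO}(2)$ satisfies $\chi_{V^{L\pm}}(r_\varphi)=\chi_{V^L}(r_\varphi)=\sum_{m=-L}^{L}\mathrm{e}^{\mathrm{i}m\varphi}=1+2\sum_{m=1}^{L}\cos(m\varphi)$, the classical weight decomposition of the spin-$L$ representation of $\mathrm{SO}(3)$ restricted to its maximal torus. By \eqref{eq:12} we have $\chi_{E^m}(r_\varphi)=2\cos(m\varphi)$ for $m\ge1$ and $\chi_{E^{0\pm}}(r_\varphi)=1$, so the right-hand side of either line of \eqref{eq:10} has exactly this character on $\mathrm{SO}(2)$, whatever sign decorates the $E^{0}$ summand. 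As $1$ and the functions $\cos(m\varphi)$, $m\ge1$, are linearly independent, this already forces the summands $E^1,\dots,E^L$ and leaves only the parity of the $E^0$ summand to be determined.

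That parity I would fix by evaluating on the reflection coset, and this is the delicate step. Write $G=\mathrm{SO}(3)\times Z^c_2$ and let $\pi_1\colon G\to\mathrm{SO}(3)$ be the first projection; concretely, $\pi_1(g)=g$ with trivial $Z^c_2$-component when $\det g=1$, and $\pi_1(g)=-g$ with $Z^c_2$-component $-e$ when $\det g=-1$. Since $\det(\tau r_\varphi)=-1$, this gives $V^{L\pm}(\tau r_\varphi)=V^L(-\tau r_\varphi)\otimes V^{\pm}(-e)=(\pm1)\,V^L(-\tau r_\varphi)$, where now $-\tau r_\varphi\in\mathrm{SO}(3)$. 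A direct trace computation yields $\tr(-\tau r_\varphi)=-1$, so $-\tau r_\varphi$ is a rotation through the angle $\pi$; hence $\chi_{V^L}(-\tau r_\varphi)=\sum_{m=-L}^{L}\mathrm{e}^{\mathrm{i}m\pi}=\sum_{m=-L}^{L}(-1)^m=(-1)^L$, independently of $\varphi$, and therefore $\chi_{V^{L\pm}}(\tau r_\varphi)=(\pm1)(-1)^L$ for every $\varphi$. On the other side, by \eqref{eq:12} each $E^m$ with $m\ge1$ has vanishing character at a reflection, while $\chi_{E^{0+}}(\tau r_\varphi)=1$ and $\chi_{E^{0-}}(\tau r_\varphi)=\det(\tau r_\varphi)=-1$, so the character of the right-hand side of \eqref{eq:10} at $\tau r_\varphi$ equals $+1$ for the $E^{0+}$ choice and $-1$ for the $E^{0-}$ choice. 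Matching $(\pm1)(-1)^L$ against this sign puts $E^{0+}$ exactly when $(\pm1)(-1)^L=1$, i.e.\ for $V^{2\ell+}$ and for $V^{(2\ell+1)-}$, and $E^{0-}$ otherwise, i.e.\ for $V^{2\ell-}$ and for $V^{(2\ell+1)+}$ — which is precisely \eqref{eq:10}. I expect the only genuine obstacle to be the bookkeeping in this last step: one must not confuse the reflection $\tau r_\varphi\in K\subset\mathrm{O}(3)$ with its image $-\tau r_\varphi\in\mathrm{SO}(3)$ under $\pi_1$, since it is the latter — a $\pi$-rotation, together with the $V^\pm$ twist — that controls the parity of the $E^0$ summand, while everything else reduces to the standard $\mathrm{SO}(3)\downarrow\mathrm{SO}(2)$ branching rule.
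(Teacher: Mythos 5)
Your proof is correct, but it takes a genuinely different route from the paper's. The paper argues by induction on $\ell$: starting from the base case $\ell=0$, it tensors the known restriction of $V^{(2\ell+1)+}$ with that of $V^{1+}$, expands the left-hand side by the $\mathrm{SO}(3)$ Clebsch--Gordan rule and the right-hand side by the $\mathrm{O}(2)$ tensor-product rules ($E^{0-}\otimes E^{0-}=E^{0+}$, $E^1\otimes E^m=E^{m-1}\oplus E^{m+1}$, etc., which are themselves verified by characters via \eqref{eq:12}), and solves for the restriction of $V^{(2\ell+2)\pm}$. You instead compute the character of $\res^G_KV^{L\pm}$ directly on the two conjugacy-class types of $\mathrm{O}(2)$ and match it against the right-hand side of \eqref{eq:10}. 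The one delicate point --- recognising that the image in $\mathrm{O}(3)$ of a reflection of $K$ factors in $\mathrm{SO}(3)\times Z^c_2$ as a $\pi$-rotation times $-e$, so that $\chi_{V^{L\pm}}(\tau r_\varphi)=(\pm1)(-1)^L$ --- you handle correctly, and the resulting sign bookkeeping reproduces \eqref{eq:10} exactly in all four cases. Your approach is shorter and avoids the induction entirely; its only extra ingredient is the fact that real representations of a compact group are determined up to equivalence by their characters, which is legitimate in general and especially transparent here since, as the paper observes later in the same example, all the real irreducibles of $G$ and $K$ involved are of real type, so complexification loses no information.
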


\begin{proof}

Note that the group~$\mathrm{O}(2)$ is embedded into $\mathrm{O}(3)$ as follows:
\[
\mathrm{O}(2)\ni k\mapsto
\begin{pmatrix}
  k & \mathbf{0} \\
  \mathbf{0}^{\top} & 1
\end{pmatrix}
\in\mathrm{O}(3).
\]
It becomes obvious that \eqref{eq:10} is true when $\ell=0$. Assume that Lemma~\ref{lem:2} is proved for all values of indices up to $\ell$. Then we have
\[
\begin{aligned}
\res^G_KV^{(2\ell+1)+}&=E^{0-}\oplus E^1\oplus\cdots\oplus E^{2\ell+1},\\
\res^G_KV^{1+}&=E^{0-}\oplus E^1.
\end{aligned}
\]
It follows that
\[
\res^G_K(V^{(2\ell+1)+}\otimes V^{1+})|=(E^{0-}\oplus E^1\oplus\cdots\oplus E^{2\ell+1})\otimes(E^{0-}\oplus E^1).
\]
The left hand side of this equality is
\[
\begin{aligned}
&\res^G_KV^{2\ell+}\oplus\res^G_KV^{(2\ell+1)+}\oplus \res^G_KV^{(2\ell+2)+}=E^{0+}\oplus E^1\oplus\cdots\oplus E^{2\ell}\\
&\quad\oplus E^{0-}\oplus E^1\oplus\cdots\oplus E^{2\ell+1}\oplus\res^G_K V^{(2\ell+2)+}
\end{aligned}
\]
by the induction hypothesis. The right hand side is
\[
E^{0+}\oplus E^1\oplus\cdots\oplus E^{2\ell+1}\oplus E^1\oplus E^{0+}\oplus E^{0-}\oplus E^2\oplus\cdots\oplus E^{2\ell}\oplus E^{2\ell+2}.
\]
Here we used the fact that $E^{0-}\otimes E^{0-}=E^{0+}$, $E^m\otimes E^{0-}=E^m$, $E^1\otimes E^1=E^{0+}\oplus E^{0-}\oplus E^2$, and $E^1\otimes E^m=E^{m-1}\oplus E^{m+1}$ for $m\geq 2$. To prove this, recall that the \emph{character} of a finite-dimensional representation is the trace of its matrix. The character is independent on the choice of a basis. Moreover, the character of the tensor product of finitely many representations is equal to the product of the characters of the terms, while that of the direct sum of finitely many representations is equal to the some of the characters of the terms. Using equation~\eqref{eq:12}, we prove that the characters of both hand sides are equal. It follows from the last two displays that
\[
\res^G_KV^{(2\ell+2)+}=E^{0+}\oplus E^1\oplus\cdots\oplus E^{2\ell+2}.
\]
Multiply this equality by the equality $\res^G_KV^{1-}=E^{0-}\oplus E^1$. We obtain
\[
\res^G_KV^{(2\ell+2)-}=E^{0-}\oplus E^1\oplus\cdots\oplus E^{2\ell+2}.
\]
The induction step from $2\ell+1$ to $2\ell+2$ is proved. The step from $2\ell+2$ to $2\ell+3$ is doing similarly.

\end{proof}

We have $E=E_+\oplus E_-\oplus E_2$. It follows that
\[
\mathrm{O}(3)\times_{\mathrm{O}(2)}E=(\mathrm{O}(3)\times_{\mathrm{O}(2)}E_+)
\oplus(\mathrm{O}(3)\times_{\mathrm{O}(2)}E_-)\oplus(\mathrm{O}(3)\times_{\mathrm{O}(2)}E_2),
\]
and similar equality is true for the spaces of the square-integrable sections of the above bundles:
\[
H=H_+\oplus H_-\oplus H_2.
\]
By Lemma~\ref{lem:2}, the representation~$E^{0+}$ belongs to the restrictions to $\mathrm{O}(2)$ of the representations $V^{0+}$, $V^{1-}$, $V^{2+}$, \dots. By Frobenius reciprocity,
\[
n(V^{\ell(-1)^{\ell}},H_+)=\frac{n(E^{0+},
\res^G_KV^{\ell(-1)^{\ell}})}{\dim_{\mathbb{R}}D(V^{\ell(-1)^{\ell}})}.
\]
To calculate $\dim_{\mathbb{R}}D(V)$ for a real irreducible representation $V$, do the following. Define the \emph{extended representation} by $e^{\mathbb{C}}_{\mathbb{R}}V=\mathbb{C}\otimes_\mathbb{R}V$. By \cite[Chapter~2, Proposition~6.6, Theorem~6.7]{MR1410059}, there may be three cases:

\begin{enumerate}
  \item $e^{\mathbb{C}}_{\mathbb{R}}V$ is irreducible. In this case, $D(V)=\mathbb{R}$ and $\dim_{\mathbb{R}}D(V)=1$.
  \item $e^{\mathbb{C}}_{\mathbb{R}}V$ is a direct sum of two non-equivalent components. In this case, $D(V)=\mathbb{C}$ and $\dim_{\mathbb{R}}D(V)=2$.
  \item $e^{\mathbb{C}}_{\mathbb{R}}V$ is a direct sum of two equivalent components. In this case, $D(V)=\mathbb{H}$ and $\dim_{\mathbb{R}}D(V)=4$.
\end{enumerate}

It is easy to check that all the real irreducible representations of both $G$ and $K$ belong to the first class. In other words, Frobenius reciprocity has the usual form. We have
\[
H_+=\sum_{\ell=0}^{\infty}\oplus V^{\ell(-1)^{\ell}},\qquad H_-=\sum_{\ell=0}^{\infty}\oplus V^{\ell(-1)^{\ell+1}}
\]
and
\begin{equation}\label{eq:14}
H_2=\sum_{\ell=2}^{\infty}\oplus (V^{\ell+}\oplus V^{\ell-}).
\end{equation}

An isotropic random field in the bundle $(\mathrm{O}(3)\times_{\mathrm{O}(2)}E_+,p,S^2)$ takes the form
\[
I(\mathbf{n})=\sum_{\ell=0}^{\infty}\sum_{m=-\ell}^{\ell}a^I_{\ell m}Y_{\ell m}(\mathbf{n}),
\]
and describes the first Stokes parameter, the intensity $I(\mathbf{n})$ of the Cosmic Microwave Background, which is proportional to the fourth power of its absolute temperature, $T(\mathbf{n})$, by the \emph{Stephan--Boltzmann law}. Like $E(\mathbf{n})$, this is a scalar field, a section of the vector bundle generated by the trivial representation of the subgroup $\mathrm{O}(2)$.

For the bundle $(\mathrm{O}(3)\times_{\mathrm{O}(2)}E_-,p,S^2)$ we have
\[
V(\mathbf{n})=\sum_{\ell=0}^{\infty}\sum_{m=-\ell}^{\ell}a^V_{\ell m}Y_{\ell m}(\mathbf{n}).
\]
This field describes the \emph{circular polarisation} $V(\mathbf{n})$ of the Cosmic Microwave Background. Like $B(\mathbf{n})$, this is a pseudo-scalar field, an isotropic random section of the vector bundle generated by the representation $g\mapsto\det g$ of the subgroup $\mathrm{O}(2)$.

Finally, an isotropic random section of the bundle $(\mathrm{O}(3)\times_{\mathrm{O}(2)}E_2,p,S^2)$ has the form
\[
\begin{pmatrix}
  Q(\mathbf{n}) & U(\mathbf{n}) \\
  U(\mathbf{n}) & -Q(\mathbf{n})
\end{pmatrix}
=\sum_{\ell=2}^{\infty}\sum_{m=-\ell}^{\ell}[a^G_{\ell m}Y^G_{\ell m}(\mathbf{n})+a^C_{\ell m}Y^C_{\ell m}(\mathbf{n})],
\]
where we denote by $\{\,Y^G_{\ell m}(\mathbf{n})\colon\ell\geq 2,-\ell\leq m\leq\ell\,\}$ a basis in the space $V^{\ell(-1)^{\ell}}$ of the expansion~\eqref{eq:14}, while $\{\,Y^C_{\ell m}(\mathbf{n})\colon\ell\geq 2,-\ell\leq m\leq\ell\,\}$ is a basis in the space $V^{\ell(-1)^{\ell+1}}$ of the above expansion. Comparing our expansion with \cite[Equation~2.10]{Kamionkowski:1996ks}, we identify the introduced basis functions with \emph{tensor spherical harmonics} described there. See also a survey \cite{MR569166} and \cite{MR0270692}.

\end{example}

\begin{example}

The Universe is identified with a four-dimensional manifold, say $M$. If the manifold $M$ were unperturbed, we would define a preferred observer as that who sees zero momentum density at his/her own position, see \cite{MR1754145}. Such observers are called \emph{comoving}. Let $(t,\mathbf{x})$ be a chart centred at a comoving observer. Then, the spacetime $M$ \emph{threads into lines} corresponding to fixed $\mathbf{x}$ and \emph{slices into hypersurfaces} corresponding to fixed $t$. The slicing is orthogonal to the threading, and, on each slice, the Universe is homogeneous.

In the presence of perturbations, it is impossible to find a chart satisfying all the above properties. A chart is called a \emph{gauge} if it converges to the comoving chart in the limit where the perturbations vanish. In what follows, we use the so called \emph{total-matter gauge} $(t,r,\mathbf{n})$ described in \cite[Subsection~14.6.4]{MR1754145}.

Let $\delta(r,\mathbf{n})$ be the overdensity field:
\[
\delta(r,\mathbf{n})=\frac{\rho(r,\mathbf{n})-\bar{\rho}}{\bar{\rho}},
\]
where $\rho(r,\mathbf{n})$ is the matter density at a point $(r,\mathbf{n})$, and $\bar{\rho}$ is the average matter density. The overdensity field is not observable because most part of matter is dark and invisible. To overcome this difficulty, we proceed in three steps.

\begin{enumerate}
  \item Relate the overdensity field $\delta(r,\mathbf{n})$ and the Newtonian potential $\Phi(r,\mathbf{n})$ by Poisson's equation
      \[
      \nabla^2\Phi(r,\mathbf{n})=\frac{3\Omega_MH^2_0}{2a(r)}\delta(r,\mathbf{n}),
      \]
      where $\Omega_M$ is the dimensionless matter density, $H_0$ is the current value of the Hubble parameter, and $a(r)$ is the dimensionless scale factor.
  \item Define the lensing potential by
  \[
  \phi(r,\mathbf{n})=\frac{2}{c^2}\int_{0}^{r}\Phi(r',\mathbf{n})\frac{r-r'}{rr'}
  \,\mathrm{d}r',
  \]
  where $c$ is the speed of light in a vacuum.
  \item For a fixed $s\in\mathbb{Z}$, define a differential operator $\eth$ (in fact, a family of operators) by
      \[
      \eth=s\cot\theta-\frac{\partial}{\partial\theta}
      -\frac{\mathrm{i}}{\sin\theta}\frac{\partial}{\partial\varphi}.
      \]
      Then we have
      \[
      \eth{}_sY_{\ell m}(\mathbf{n})=
      \begin{cases}
        0, & \mbox{if } \ell=s\geq 0, \\
        \sqrt{(\ell-s)(\ell+s+1)}{}_{s+1}Y_{\ell m}(\mathbf{n}), & \mbox{otherwise}.
      \end{cases}
      \]
      In other words, $\eth$ maps a section of the spin-$s$ bundle to a section of spin-$s+1$ bundle, hence the name \emph{spin-rising operator}.
      The conjugate operator
      \[
      \eth^*=s\cot\theta-\frac{\partial}{\partial\theta}
      +\frac{\mathrm{i}}{\sin\theta}\frac{\partial}{\partial\varphi}
      \]
      lowers the spin:
      \[
      \eth^*{}_sY_{\ell m}(\mathbf{n})=
      \begin{cases}
        0, & \mbox{if } \ell=-s\geq 0, \\
        \sqrt{(\ell+s)(\ell-s+1)}{}_{s-1}Y_{\ell m}(\mathbf{n}), & \mbox{otherwise},
      \end{cases}
      \]
      hence the name \emph{spin-lowering operator}.

      Define the \emph{distortion fields} ${}_sX((r,\mathbf{n})$ by
\begin{equation}\label{eq:13}
\begin{aligned}
{}_0\kappa(r,\mathbf{n})&=\frac{1}{4}(\eth\eth^*+\eth^*\eth)\phi(r,\mathbf{n}),\\
{}_1\mathcal{F}(r,\mathbf{n})&=-\frac{1}{6}(\eth^*\eth\eth+\eth\eth^*\eth+\eth\eth\eth^*)\phi(r,\mathbf{n}),\\
{}_2\gamma(r,\mathbf{n})&=\frac{1}{2}\eth^2\phi(r,\mathbf{n}),\\
{}_3\mathcal{G}(r,\mathbf{n})&=-\frac{1}{2}\eth^3\phi(r,\mathbf{n}).
\end{aligned}
\end{equation}
\end{enumerate}

The constructed fields are called the \emph{magnification}, the \emph{first flexion}, the \emph{shear}, and the \emph{third flexion}. In contrast to the overdensity field, they are \emph{observable}. To find their spectral expansions, use the idea formulated by M.\u{I}. Yadrenko in \cite{MR0164376}. Suppose these fields are mean-square continuous. The restriction of the field ${}_sX(r,\mathbf{n})$ to the centred sphere of a fixed radius $r_0>0$ is an isotropic random section of the bundle $(\mathrm{O}(3)\times_{\mathrm{O}(2)}E_s,p,S^2)$ and has the form
\[
{}_sX(r_0,\mathbf{n})=\sum_{\ell=s}^{\infty}\sum_{m=-\ell}^{\ell}{}_sa_{\ell m}(r_0)\,{}_sY_{\ell m}(\mathbf{n}),\qquad 0\leq s\leq 3.
\]
Varying $r_0$, we obtain
\[
{}_sX(r,\mathbf{n})=\sum_{\ell=s}^{\infty}\sum_{m=-\ell}^{\ell}{}_sa_{\ell m}(r)\,{}_sY_{\ell m}(\mathbf{n}),
\]
where ${}_sa_{\ell m}(r)$ is a sequence of stochastic processes satisfying $\mathsf{E}[{}_sa_{\ell m}(r)]=0$ unless $s=\ell=m=0$. For simplicity, put $\mathsf{E}[{}_0a_{00}(r)]=0$. Moreover, we have
\[
\mathsf{E}[{}_sa_{\ell m}(r_1)\overline{{}_sa_{\ell'm'}(r_2)}]=\delta_{\ell\ell'}
\delta_{mm'}\,{}_sC_{\ell}(r_1,r_2),
\]
with
\[
\sum_{\ell=s}^{\infty}(2\ell+1){}_sC_{\ell}(r,r)<\infty,\qquad r\geq 0.
\]

The stochastic processes ${}_sa_{\ell m}(r)$ are expressed through the values of the distortion field as
\[
{}_sa_{\ell m}(r)=\int_{S^2}{}_sX(r,\mathbf{n})\,\overline{{}_sY_{\ell m}(\mathbf{n})}\,\mathrm{d}\mathbf{n}
\]
and are usually represented by
\begin{equation}\label{eq:15}
{}_sa_{\ell m}(r)=\sqrt{\frac{2}{\pi}}\int_{0}^{\infty}j_{\ell}(kr){}_s\tilde{a}_{\ell m}(k)k^2\,\mathrm{d}k,
\end{equation}
where
\[
{}_s\tilde{a}_{\ell m}(k)=\sqrt{\frac{2}{\pi}}\int_{0}^{\infty}j_{\ell}(kr){}_sa_{\ell m}(r)r^2\,\mathrm{d}r
\]
is the Fourier--Bessel (or Hankel) transform of the process ${}_sa_{\ell m}(r)$, and $k$ is the radial wavenumber. Here we implicitly suppose that the sample paths of the above process are a.s. square-integrable. The spectral expansion of the distortion field takes the form
\[
{}_sX(r,\mathbf{n})=\sqrt{\frac{2}{\pi}}\sum_{\ell=s}^{\infty}\sum_{m=-\ell}^{\ell}
\int_{0}^{\infty}j_{\ell}(kr)\,{}_s\tilde{a}_{\ell m}(k)k^2\,\mathrm{d}k\,{}_sY_{\ell m}(\mathbf{n}),
\]
see \cite{Castro_2005,Heavens_2003,Kitching_2011,Munshi_2011}.

The drawback of the above approach is as follows: there exist no method to compute the transform \eqref{eq:15} exactly for a useful class of functions, see \cite{Leistedt_2012}. Moreover, the spherical Bessel functions $j_{\ell}(kr)$ are highly oscillating, and finding a good quadrature formula is a complicated issue. To overcome these difficulties, wavelet methods were proposed by \cite{Leistedt_2012,Leistedt_2015}. We propose a different approach.

Assume the following: the distortion field is Gaussian, has a.s. continuous sample paths and is observed in the ball of radius~$R$. Then, the stochastic processes ${}_sa_{\ell m}(r)$ are Gaussian, independent, a.s. continuous, and their distributions depend on $\ell$ but do not depend on $m$. Each process generates a Gaussian centred measure ${}_s\mu_{\ell}$ in the real separable Banach space $C[0,R]$. Let ${}_sH_{\ell}$ be the reproducing kernel Hilbert space of ${}_s\mu_{\ell}$ with inner product $\langle\boldsymbol{\cdot},\boldsymbol{\cdot}\rangle$, see \cite{MR1435288}. A sequence $\{\,{}_sf_{\ell j}(r)\colon j\geq 1\,\}$ of the elements of ${}_sH_{\ell}$ is called a \emph{Parceval frame} \cite{MR2511282} if
\[
\sum_{j=1}^{\infty}\langle{}_sf_{\ell j},h\rangle\,{}_sf_{\ell j}=h
\]
for all $h\in{}_sH_{\ell}$, where the series converges in the norm of ${}_sH_{\ell}$. By adding zeroes, finite sequences may also serve as frames. Let ${}_sX_{\ell j}$ be independent standard normal random variables. By \cite[Theorem~1]{MR2511282} the process ${}_sa_{\ell m}(r)$ has a.s. uniformly convergent expansion
\[
{}_sa_{\ell m}(r)=\sum_{j=1}^{\infty}{}_sf_{\ell j}(r)\,{}_sX_{\ell mj}
\]
if and only if the sequence $\{\,{}_sf_{\ell j}(r)\colon j\geq 1\,\}$ is a Parceval frame for ${}_sH_{\ell}$. The spectral expansion of the distortion field takes the form
\[
{}_sX(r,\mathbf{n})=\sum_{\ell=s}^{\infty}\sum_{m=-\ell}^{\ell}\sum_{j=1}^{\infty}{}_sf_{\ell j}(r)\,{}_sX_{\ell mj}\,{}_sY_{\ell m}(\mathbf{n}).
\]

In other words, each ``component''
\[
{}_sX_{\ell}(r,\mathbf{n})=\sum_{m=-\ell}^{\ell}{}_sa_{\ell m}(r)\,{}_sY_{\ell m}(\mathbf{n})
\]
of the distortion random field ${}_sX(r,\mathbf{n})$ is expanded with respect to a \emph{separable} Parceval frame
\[
\{\,{}_sf_{\ell j}(r)\,{}_sY_{\ell m}(\mathbf{n})\colon j\geq 1,-\ell\leq m\leq\ell\,\}
\]
for the Hilbert space ${}_sH_{\ell}\otimes L^2(S^2,\mathrm{O}(3)\times_{\mathrm{O}(2)}E_s)$, that is, the coordinates $r$ and $\mathbf{n}$ are separated. For each $\ell\geq s$, one can choose a suitable basis $\{\,{}_sf_{\ell j}(r)\colon j\geq 1\,\}$ in the Hilbert space $L^2([0,R],r^2\,\mathrm{d}r)$. These may be wavelets, orthogonal polynomials, etc. Assume that there are real numbers ${}_sc_{\ell j}$ such that the sequence $\{\,{}_sc_{\ell j}\,{}_sf_{\ell j}(r)\colon j\geq 1\,\}$ is a Parceval frame for ${}_sH_{\ell}$. The distortion random field takes the form
\[
{}_sX(r,\mathbf{n})=\sum_{\ell=s}^{\infty}\sum_{m=-\ell}^{\ell}\sum_{j=1}^{\infty}
{}_sc_{\ell j}\,{}_sf_{\ell j}(r)\,{}_sX_{\ell mj}\,{}_sY_{\ell m}(\mathbf{n}).
\]
The obtained expansion may serve as a base for the further statistical analysis of the distortion random field.

\end{example}


\begin{thebibliography}{99}

\bibitem{MR3170229} Baldi, P. and Rossi, M. Representation of Gaussian isotropic spin random fields. \emph{Stochastic Process. Appl.} \textbf{124}, no.~5 (2014), 1910--1941.

\bibitem{MR1410059} Br\"ocker, T. and tom Dieck, T. \emph{Representations of compact Lie groups}, Graduate Texts in Mathematics, \textbf{98}, Springer, New York, 1995.

\bibitem{Castro_2005} Castro, P.G., Heavens, A.F. and Kitching, T.D. Weak lensing analysis in three dimensions. \emph{Phys. Rev. D} \textbf{72}, no.~2 (2005), 023516.

\bibitem{MR2737761} Geller, D. and Marinucci, D. Spin wavelets on the sphere. \emph{J. Fourier Anal. Appl.} \textbf{16}, no.~6 (2010), 840--884.

\bibitem{Heavens_2003} Heavens, A.F. 3D weak lensing. \emph{Mon. Not. Roy. Astron. Soc.} \textbf{343}, no.~4 (2003), 1327--1334.

\bibitem{MR780184} Jacobson, N. \emph{Basic algebra}. I. Second ed. W. H. Freeman, New York, 1985.

\bibitem{Kamionkowski:1996ks} Kamionkowski, M., Kosowsky, A. and Stebbins, A. Statistics of cosmic microwave background polarization. \emph{Phys. Rev. D} \textbf{55} (1997), 7368--7388.

\bibitem{Kitching_2011} Kitching, T.D., Heavens, A.F. and Miller, L. 3D photometric cosmic shear. \emph{Mon. Not. Roy. Astron. Soc.} \textbf{413}, no.~4 (2011), 2923--2934.

\bibitem{Leistedt_2012} Leistedt, B. and McEwen, G.D. Exact wavelets on the ball. \emph{IEEE Trans. Signal Process.} \textbf{60}, no.~12 (2012), 6257--6269.

\bibitem{Leistedt_2015} Leistedt, B., McEwen, G.D., Kitching, T.D. and Peiris, H.V. 3D weak lensing with spin wavelets on the ball. \emph{Phys. Rev. D} \textbf{92}, no.~12 (2015), 123010.

\bibitem{MR2870527} Leonenko, N. and Sakhno, L. On spectral representations of tensor random fields on the sphere. \emph{Stoch. Anal. Appl.} \textbf{30}, no.~1 (2012), 44--66.

\bibitem{MR1754145} Liddle, A.R. and Lyth, D.H. \emph{Cosmological inflation and large-scale structure}. Cambridge University Press, Cambridge, 2000.

\bibitem{MR2511282} Luschgy, H. and Pag\`es, G. Expansions for Gaussian processes and Parseval frames. \emph{Electron. J. Probab.} \textbf{14} (2009), 1198--1221.

\bibitem{MR2884225} Malyarenko, A. Invariant random fields in vector bundles and application to cosmology. \emph{Ann. Inst. Henri Poincar\'e Probab. Stat.} \textbf{47}, no.~4 (2011), 1068--1095.

\bibitem{MR2977490} Malyarenko, A. \emph{Invariant random fields on spaces with a group action}, Springer, Heidelberg, 2013.

\bibitem{MR3064996} Marinucci, D. and Peccati, G. Mean-square continuity on homogeneous spaces of compact groups. \emph{Electron. Commun. Probab.} \textbf{18}, no.~37, 10~pp.

\bibitem{Munshi_2011} Munshi, D., Heavens, A.F. and Coles, P. Higher-order convergence statistics for three-dimensional weak gravitational lensing. \emph{Mon. Not. Roy. Astron. Soc.} \textbf{411}, no.~4 (2011), 2161--2185.

\bibitem{MR0194172} Newman, E. T. and Penrose, R. Note on the Bondi--Metzner--Sachs group. \emph{J. Mathematical Phys.} \textbf{7} (1966), 863--870.

\bibitem{MR569166} Thorne, K.S. Multipole expansions of gravitational radiation. \emph{Rev. Modern Phys.} \textbf{52}, no.~2 (1980), 299--339.

\bibitem{MR1435288} Vakhania, N.N., Tarieladze, V.I. and Chobanyan, S.A. \emph{Probability distributions on Banach spaces}. Mathematics and its Applications (Soviet Series) \textbf{14}, D. Reidel Publishing Co., Dordrecht, 1987.

\bibitem{MR0164376} Yadrenko, M.\u{I}. Isotropic random fields of Markov type in Euclidean space. \emph{Dopovidi Akad. Nauk Ukra\"\i n. RSR} \textbf{1963} (1963), 304--306.

\bibitem{PhysRevD.55.1830} Zaldarriaga, M. and Seljak, U. All-sky analysis of polarization in the microwave background. \emph{Phys. Rev. D} \textbf{55}, no.~4 (1997), 1830--1840.

\bibitem{MR0270692} Zerilli, F.J. Tensor harmonics in canonical form for gravitational radiation and other applications. \emph{J. Mathematical Phys.} \textbf{11} (1970), 2203--2208.

\end{thebibliography}
\end{document}